\date{}
\renewcommand{\uppercasenonmath}[1]{}
\numberwithin{equation}{section} \theoremstyle{plain}
\newtheorem{lem}{Lemma}[section]
\newtheorem{cor}[lem]{Corollary}
\newtheorem{prop}[lem]{Proposition}
\newtheorem{thm}[lem]{Theorem}
\newtheorem{definition}[lem]{Definition}
\newtheorem{Ex}[lem]{Example}
\newtheorem{Quest}[lem]{Question}
\newtheorem{Property}[lem]{Property}
\newtheorem{Properties}[lem]{Properties}
\newtheorem{Subprops}{}[lem]
\newtheorem{Para}[lem]{}
\newtheorem{fact}[lem]{Fact}
\newtheorem{rem}[lem]{Remark}
\newenvironment{ex}{\begin{Ex}\rm}{\end{Ex}}
\newtheorem*{ack*}{ACKNOWLEDGEMENTS}
\newcommand{\pf}{\noindent\begin {proof}}
\newcommand{\epf}{\end{proof}}
\newcommand{\D}{{\mathcal{D}}}
\newcommand{\GPC}{\mathcal{GP}(\mathcal{C})}
\newcommand{\GPD}{\mathcal{GP}(\mathcal{D})}
\newcommand{\C}{\mathcal{C}}
\newcommand{\s}{\stackrel}
\newcommand{\RM}{{_{R}\mathcal{M}}}
\newcommand{\SM}{{_{S}\mathcal{M}}}
\newcommand{\Ext}{{\rm Ext}}
\newcommand{\Hom}{{\rm Hom}}
\newcommand{\ra}{\rightarrow}
\begin{document}
\begin{center}
{\Large  \bf  Frobenius functors and Gorenstein projective precovers}

\vspace{0.5cm}  Jiangsheng Hu, Huanhuan Li, Jiafeng L${\rm\ddot{u}}$ and Dongdong Zhang\footnote{Corresponding author.\\
\indent Jiangsheng Hu was supported by the NSF of China (Grants Nos. 11671069, 11771212), Qing Lan Project of Jiangsu
Province and Jiangsu Government Scholarship for Overseas Studies (JS-2019-328). Jiafeng L\"{u} was supported by NSF of China (Grant No.11571316) and Natural
Science Foundation of Zhejiang Province (Grant No. LY16A010003).\\
\indent {\it Key words and phrases.} Frobenius functor;  Gorenstein projective object; precover.\\
\indent 2010 {\it Mathematics Subject Classification.} 18G25, 18G10, 13B02.} \\
\medskip

\end{center}

\bigskip
\centerline { \bf  Abstract}
\medskip

\leftskip10truemm \rightskip10truemm \noindent We establish relations between Gorenstein projective precovers linked by Frobenius functors. This is motivated by an open problem that how to find general classes of rings for which modules have Gorenstein projective precovers. It is shown that if $F:\C\rightarrow\D$ is a separable Frobenius functor between abelian categories with enough projective objects, then every object in $\C$ has a Gorenstein projective precover provided that every object in $\D$ has a Gorenstein projective precover. This result is applied to separable Frobenius extensions and excellent extensions.
\leftskip10truemm \rightskip10truemm \noindent
\hspace{1em} \\[2mm]

\leftskip0truemm \rightskip0truemm
\section { \bf Introduction}

Gorenstein homological algebra is the relative version of homological algebra
that replaces the classical projective (injective, flat) objects with the
Gorenstein projective (Gorenstein injective, Gorenstein flat) ones.
A basic problem in Gorenstein homological algebra is to try to get Gorenstein analogues of results in the classical homological algebra. Perhaps the fundamental problem is to find general classes of rings for which modules have Gorenstein projective precovers. So
far the existence of Gorenstein projective precovers (of left modules) is known over a left coherent ring for which the projective dimension of any flat right module is finite (see \cite{EIO}). Examples of such rings include but are not limited to: Gorenstein rings, commutative noetherian rings of finite Krull dimension, as well as two sided noetherian rings $R$ such that the injective dimension of $R$ (as a right $R$-module) is finite. But
for arbitrary rings this is still an open question. Work on this problem can be seen in \cite{Asadollahi,BHG,EEI,EIO,EIK,Jorgensen} for instance.

Recall that a pair of functors $(F,G)$ is said to be a Frobenius pair \cite{casta} if $G$ is at the same time a left and a right adjoint of $F$. That is a standard name which we use instead of Morita's original ``strongly adjoint pairs" in \cite{Morita}. The functors $F$ and $G$ are known as Frobenius functors.
A prominent example of Frobenius functors is provided by Frobenius extensions. Recall that an extension $S\subseteq R$ of rings is called a \emph{Frobenius extension} if $R$ is finitely generated and projective
as a left $S$-module and $R\cong\Hom_{S}(_{S}R,S)$ as an $R$-$S$-bimodule.  The invariant properties of rings under Frobenius functors
have been studied by many authors, see \cite{Chen2013,HLGZ,Nakayama,Pappacena,Renwei2018,Renwei,xicc,Zhaozb} for instance.

In this paper, we shall establish relations between Gorenstein projective precovers (in abelian categories with enough projective objects) linked by Frobenius functors, including Frobenius extensions of rings.

To state our main result more precisely, let us first introduce some definitions.

Assume that $F:\mathcal{C}\rightarrow \mathcal{D}$  and $G:\mathcal{D}\rightarrow \mathcal{C}$ are covariant functors between abelian categories. Recall from \cite{Hilton} that if $(F, G)$ is an adjoint pair, then the unit $\eta: 1_{\mathcal{C}}\rightarrow GF$ and the counit $\varepsilon: FG\rightarrow 1_{\mathcal{D}}$ of the adjunction satisfy the identities $\varepsilon_{F(X)} F(\eta_X)=1_{F(X)}$ and $G(\varepsilon_{Y}) \eta_{G(Y)}=1_{G(Y)}$ for all $X\in\mathcal{C}$ and $Y\in\mathcal{D}$. By Lemma \ref{lem0}, for any adjoint pair $(F, G)$, one has $F$ is separable if and only if $\eta: 1_{\mathcal{C}}\rightarrow GF$  is a split monomorphism and $G$ is separable if and only if $\varepsilon:FG\rightarrow 1_{\mathcal{D}}$  is a split epimorphism.
Moreover, if $(F,G)$ and $(G,F)$ are adjoint pairs, then we say that $F$ and $G$ are \emph{Frobenius
functors} and $(F,G)$ is a \emph{Frobenius pair} by \cite{casta}.

Let $\mathcal{A}$ be an abelian category with enough projective objects. Recall that  an object $M$ in $\mathcal{A}$ is called \emph{Gorenstein projective} \cite{EJGP,Sather} if  there exists an exact complexes of projective objects in $\mathcal{A}$:
 $$\mathbf{P}:\cdots\ra P_1\ra P_0\ra P^0\ra P^1\ra \cdots,$$
with $M\cong\textrm{ker}(P^{0}\ra P^{1})$ such that ${\rm Hom}_{\mathcal{A}}(\mathbf{P},Q)$ is exact for any projective object $Q$. In what follows, we denote by $\mathcal{GP}(\mathcal{A})$ the subcategory of $\mathcal{A}$ consisting of Gorenstien projective objects.

Let $\mathcal{X}$ be a class of objects in an abelian category $\mathcal{A}$.  A homomorphism $\varphi:
X\rightarrow M$ with $X\in \mathcal{X}$ is called an
\emph{$\mathcal{X}$-precover} of $M$ \cite{EEEI} if for any
homomorphism $f:X'\rightarrow M$ with $X'\in \mathcal{X}$, there is
a homomorphism $g:X'\rightarrow X$ such that $\varphi g=f$. The class $\mathcal{X}$ is
called \emph{precovering} in $\mathcal{A}$ if every object in $\mathcal{A}$ has an
$\mathcal{X}$-precover.

Now, our main result can be stated as follows.
\begin{thm}\label{tem:GP-precovering} Let $F:\mathcal{C}\rightarrow \mathcal{D}$  and $G:\mathcal{D}\rightarrow \mathcal{C}$ are covariant functors between abelian categories with enough projective objects, and let $(F,G)$ be a Frobenius pair.
 \begin{enumerate}
\item Assume that $G$ is a separable Frobenius functor. If $\GPC$ is precovering in $\mathcal{C}$, then $\GPD$ is precovering in $\mathcal{D}$.

\item Assume that $F$ is a separable Frobenius functor. If $\GPD$ is precovering in $\mathcal{D}$, then $\GPC$ is precovering in $\mathcal{C}$.
\end{enumerate}
\end{thm}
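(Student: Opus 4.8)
The plan is to reduce the statement to three preparatory facts about the Frobenius pair $(F,G)$, and then to construct the required precovers explicitly, transporting the precovering hypothesis across $F$ and $G$.

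First I would set up the categorical backbone. Since $(F,G)$ is a Frobenius pair, each of $F$ and $G$ is simultaneously a left and a right adjoint, hence both functors are exact. Moreover, as $F$ is left adjoint to the exact functor $G$, it sends projectives to projectives, and symmetrically $G$ sends projectives to projectives. On this basis I would prove the crucial lemma that \emph{both $F$ and $G$ preserve Gorenstein projective objects}: if $M\in\GPC$ comes from a complete projective resolution $\mathbf{P}$ with $M\cong\ker(P^0\ra P^1)$, then $F(\mathbf{P})$ is again an exact complex of projectives of $\D$ with $F(M)\cong\ker(F(P^0)\ra F(P^1))$, and for every projective $Q\in\D$ the natural isomorphism $\Hom_{\D}(F(\mathbf{P}),Q)\cong\Hom_{\C}(\mathbf{P},G(Q))$ combined with projectivity of $G(Q)$ forces $\Hom_{\D}(F(\mathbf{P}),Q)$ to be exact; hence $F(M)\in\GPD$. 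The statement for $G$ is the mirror image, obtained through the adjunction $(G,F)$.

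Next I would record the separability input: by Lemma~\ref{lem0}, separability of $F$ provides a \emph{natural} transformation $\mu\colon GF\ra 1_{\C}$ with $\mu\eta=1_{\C}$, while separability of $G$ provides a natural transformation $\nu\colon 1_{\D}\ra FG$ with $\varepsilon\nu=1_{\D}$. For part (2), given $M\in\C$ I take a $\GPD$-precover $\varphi\colon H\ra F(M)$ of $F(M)$ and set $\Theta=\mu_M\circ G(\varphi)\colon G(H)\ra M$, noting $G(H)\in\GPC$ by the preservation lemma. To see $\Theta$ is a precover, given $f\colon N\ra M$ with $N\in\GPC$ I factor $F(f)=\varphi h$ (possible since $F(N)\in\GPD$) and check that $g:=G(h)\,\eta_N$ lifts $f$: indeed $\Theta g=\mu_M\,G(\varphi)G(h)\,\eta_N=\mu_M\,GF(f)\,\eta_N=f\,\mu_N\eta_N=f$, by naturality of $\mu$ and $\mu\eta=1_{\C}$. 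Part (1) is strictly dual: from a $\GPC$-precover $\psi\colon K\ra G(N)$ of $G(N)$ the map $\varepsilon_N\circ F(\psi)\colon F(K)\ra N$ is the desired $\GPD$-precover, and any $f\colon M'\ra N$ with $M'\in\GPD$ is lifted by $F(k)\,\nu_{M'}$ (where $\psi k=G(f)$), using naturality of $\varepsilon$ and $\varepsilon\nu=1_{\D}$.

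I expect the diagram chases in the last paragraph to be routine once naturality of the splittings is in hand. The main obstacle is the preservation lemma, and within it the verification that $F(\mathbf{P})$ remains a \emph{complete} projective resolution rather than merely an exact complex of projectives. This is precisely the point where two-sided adjointness is indispensable: it is needed to secure exactness of $F$ and $G$ (so that $F(\mathbf{P})$ stays exact and realizes $F(M)$ as the relevant kernel) and, through the cross-adjunction $\Hom_{\D}(F(-),Q)\cong\Hom_{\C}(-,G(Q))$ with $G(Q)$ projective, to transfer $\Hom$-acyclicity against projectives. I would be careful to confirm that this adjunction isomorphism is an isomorphism of complexes, so that exactness of $\Hom_{\C}(\mathbf{P},G(Q))$ genuinely yields exactness of $\Hom_{\D}(F(\mathbf{P}),Q)$.
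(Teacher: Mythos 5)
Your proposal is correct and follows essentially the same route as the paper: the same preservation lemma (both $F$ and $G$ send Gorenstein projectives to Gorenstein projectives, proved via exactness and the adjunction $\Hom_{\D}(F(\mathbf{P}),Q)\cong\Hom_{\C}(\mathbf{P},G(Q))$), the same candidate precover $\varepsilon_N\circ F(\psi)$ (resp.\ $\mu_M\circ G(\varphi)$), and the same use of the natural splitting of the counit (resp.\ unit) supplied by separability. The only difference is organizational: the paper first establishes that $F(\GPC)$ is precovering in $\D$ (Lemma~\ref{lem:GP-precovering-class}, via an adjunction diagram chase using Lemma~\ref{lem:a-GP-precover}) and then composes with the splitting in the final step, whereas you verify the precover property in one stroke by factoring $G(f)$ through the precover in $\C$ and transporting back with $F$, naturality, and $\varepsilon\nu=1$ --- a slightly shorter but equivalent argument.
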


We will apply Theorem \ref{tem:GP-precovering} to Frobenius extensions and excellent extensions. As a result, we produce some examples of rings such that the class of Gorenstein projective modules is precovering over them (see Example \ref{ex:excellent}).

The proof of the above results will be carried out in the next section.

\section{\bf Main results}\label{pre}

We begin this section with the following definition.
\begin{definition}{\rm\label{df:2.1}\cite{NA} A covariant functor $F:\mathcal{C}\rightarrow\mathcal{D}$ is said to be \emph{separable}
if for all objects $M,N$ in $\mathcal{C}$ there are maps $\varphi:\Hom_{\mathcal{D}}(F(M),F(N))\to\Hom_{\mathcal{C}}(M,N)$, satisfying the following conditions:
\begin{enumerate}
 \item For $\theta\in{\Hom_{\mathcal{C}}(M,N)}$ we have $\varphi(F(\theta))=\theta$.
 \item Given $M',N'$ in $\C$, $\alpha\in{\Hom_{\mathcal{C}}(M,M')}$, $\beta\in{\Hom_{\mathcal{C}}(N,N')}$, $f\in{\Hom_{\mathcal{D}}(F(M),F(N))}$, $g\in{\Hom_{\mathcal{D}}(F(M'),F(N'))}$ such that the following diagram is commutative:
     $$\xymatrix{F(M)\ar[d]^{F(\alpha)}\ar[r]^{f}&F(N)\ar[d]^{F(\beta)}\\
F(M')\ar[r]^{g}&F(N'),}$$
then the following diagram is also commutative:
$$\xymatrix{M\ar[d]^{\alpha}\ar[r]^{\varphi(f)}&N\ar[d]^{\beta}\\
M'\ar[r]^{\varphi(g)}&N'.}$$
\end{enumerate}}
\end{definition}

The following lemma collects some results on adjoint functors (see \cite[Theorem 1, p.89]{Maclane} and \cite[Theorem 1.2]{Rafael}):

\begin{lem}\label{lem0} Let $F:\mathcal{C}\rightarrow \mathcal{D}$  and $G:\mathcal{D}\rightarrow \mathcal{C}$ be functors between abelian categories, and let $(F, G)$ be an adjoint pair.
\begin{enumerate}
 \item $F$ is faithful if and only if $\eta_{X}:X\rightarrow GF(X)$  is a monomorphism for any $X\in\mathcal{C}$.
 \item $G$ is faithful if and only if $\varepsilon_{Y}:FG(Y)\rightarrow Y$  is an epimorphism for any $Y\in\mathcal{D}$.
 \item  $F$ is separable if and only if $\eta: 1_{\mathcal{C}}\rightarrow GF$  is a split monomorphism, i.e. there exists a
natural transformation $\psi: GF \rightarrow 1_{\mathcal{C}}$ such that $\psi\eta=1$.
 \item $G$ is separable if and only if $\varepsilon:FG\rightarrow 1_{\mathcal{D}}$  is a split epimorphism, i.e. there exists a
natural transformation $\xi: 1_{\mathcal{D}} \rightarrow FG$ such that $\varepsilon\xi=1$.
 \end{enumerate}
\end{lem}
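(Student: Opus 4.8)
The four assertions are the standard faithfulness and separability criteria attached to an adjunction, and the plan is to follow \cite[Theorem 1, p.89]{Maclane} for (1)--(2) and Rafael's argument \cite[Theorem 1.2]{Rafael} for (3)--(4), recording the constructions explicitly since we will need the natural transformations $\psi$ and $\xi$ later. For (1) I would exploit the fact that the assignment $\theta\mapsto F(\theta)$ on $\Hom_{\mathcal{C}}(X,X')$, after composing with the adjunction bijection $\Hom_{\mathcal{D}}(F(X),F(X'))\cong\Hom_{\mathcal{C}}(X,GF(X'))$, becomes post-composition with $\eta_{X'}$ (by naturality of $\eta$). Since the adjunction map is a bijection, $F$ is injective on each hom-set exactly when post-composition with $\eta_{X'}$ is injective, and ranging over all $X$ this holds for every $X'$ precisely when each $\eta_{X'}$ is a monomorphism; dually, writing the action of $G$ through $\Hom_{\mathcal{C}}(G(Y),G(Y'))\cong\Hom_{\mathcal{D}}(FG(Y),Y')$ turns it into pre-composition with $\varepsilon_{Y}$, which yields (2).

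For the separability criteria I would treat (3) in detail and obtain (4) by dualizing. For the ``if'' direction of (3), given a natural $\psi:GF\to 1_{\mathcal{C}}$ with $\psi\eta=1$, set $\varphi(f)=\psi_{N}\circ G(f)\circ\eta_{M}$ for $f\in\Hom_{\mathcal{D}}(F(M),F(N))$. Condition (1) of Definition \ref{df:2.1} then reads $\varphi(F(\theta))=\psi_{N}\,GF(\theta)\,\eta_{M}=\psi_{N}\eta_{N}\theta=\theta$, using naturality of $\eta$ and $\psi\eta=1$, while condition (2) follows from naturality of $\psi$ together with naturality of $\eta$ and functoriality of $G$ (the commuting square $F(\beta)f=gF(\alpha)$ is transported to $\beta\varphi(f)=\varphi(g)\alpha$ after applying $G$ and composing with the appropriate units). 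For the ``only if'' direction I would define $\psi_{X}=\varphi(\varepsilon_{F(X)})$, where $\varepsilon_{F(X)}\in\Hom_{\mathcal{D}}(F(GF(X)),F(X))$. Part (4) is entirely symmetric: from the separability data of $G$ one puts $\xi_{Y}=\varphi(\eta_{G(Y)})$, where $\eta_{G(Y)}\in\Hom_{\mathcal{C}}(G(Y),G(FG(Y)))$.

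The only genuinely delicate point is the ``only if'' direction of (3) and (4): one must check that these pointwise formulas assemble into natural transformations and split the (co)unit. Here the triangle identities feed into the compatibility axiom (2) of Definition \ref{df:2.1}. For (3), applying (2) to the square built from $\varepsilon_{F(X)}\circ F(\eta_{X})=1_{F(X)}$ gives $\psi_{X}\eta_{X}=\varphi(1_{F(X)})=1_{X}$, and applying (2) to the naturality square of $\varepsilon$ along $F(\theta)$ gives $\psi_{Y}\circ GF(\theta)=\theta\circ\psi_{X}$, so $\psi$ is natural and splits $\eta$. Symmetrically, for (4) the identity $G(\varepsilon_{Y})\circ\eta_{G(Y)}=1_{G(Y)}$ and the naturality of $\eta$ produce $\varepsilon_{Y}\xi_{Y}=1_{Y}$ and the naturality of $\xi$. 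Thus the main obstacle is not conceptual but bookkeeping: in each invocation of axiom (2) one has to select the correct morphisms $f,g,\alpha,\beta$ so that exactly the triangle identities and the naturality of the unit/counit carry out the verification.
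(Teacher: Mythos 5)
Your proposal is correct and follows exactly the route the paper itself takes: the paper gives no proof of this lemma but cites \cite[Theorem 1, p.89]{Maclane} for the faithfulness criteria and \cite[Theorem 1.2]{Rafael} for the separability criteria, and your arguments (transporting $F(\theta)$ and $G(g)$ through the adjunction bijections, and Rafael's constructions $\varphi(f)=\psi_N\,G(f)\,\eta_M$, $\psi_X=\varphi(\varepsilon_{F(X)})$, $\xi_Y=\varphi(\eta_{G(Y)})$) are precisely the standard proofs from those references. The bookkeeping in each invocation of Definition \ref{df:2.1}(2) is carried out correctly.
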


The following fact collects some results on Frobenius functors (see \cite[Proposition 1.3]{casta}):

\begin{fact} \label{fact:2.1} Let $\mathcal{C}$ and $\mathcal{D}$ are abelian categories and $(F,G)$  a Frobenius pair.
\begin{enumerate}
\item $F$ and $G$ are exact functors.
\item If $\mathcal{C}$ and $\mathcal{D}$ have projective objects, then both $F$ and $G$ preserve projective objects. Moreover, we have $\Ext_{\mathcal{C}}^{i}(X,G(Y)\cong\Ext_{\mathcal{D}}^{i}(F(X),Y)$ and $\Ext_{\mathcal{C}}^{i}(G(Y),X)\cong\Ext_{\mathcal{D}}^{i}(Y,F(X))$
 for all $i\geqslant0$,  $X\in\mathcal{C}$ and  $Y\in\mathcal{D}$.
 \end{enumerate}
\end{fact}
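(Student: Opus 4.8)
The plan is to exploit the one structural feature that distinguishes a Frobenius pair from an ordinary adjoint pair, namely that each of $F$ and $G$ is \emph{simultaneously} a left and a right adjoint of the other, and then to transport homological data across the two adjunction isomorphisms at the level of complexes. Everything reduces to two classical categorical principles: a left adjoint preserves colimits and a right adjoint preserves limits.

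For part (1), I would recall that between abelian categories a left adjoint preserves cokernels and is therefore right exact, while a right adjoint preserves kernels and is therefore left exact. Since $(F,G)$ is an adjoint pair, $F$ is a left adjoint and $G$ is a right adjoint; since $(G,F)$ is \emph{also} an adjoint pair, $F$ is in addition a right adjoint and $G$ a left adjoint. Hence $F$ is both right and left exact, and likewise $G$, so both are exact. This argument is short and uses only the existence of the two adjunctions.

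For the first assertion of part (2) I would show that $F$ preserves projectives by rewriting the relevant representable functor through the adjunction $(F,G)$: for $P$ projective in $\mathcal{C}$ there is a natural isomorphism $\Hom_{\mathcal{D}}(F(P),-)\cong\Hom_{\mathcal{C}}(P,G(-))$. The right-hand side is the composite of $G$ (exact by part (1)) with $\Hom_{\mathcal{C}}(P,-)$ (exact since $P$ is projective), hence it is exact, so $F(P)$ is projective. Symmetrically, the adjunction $(G,F)$ gives $\Hom_{\mathcal{C}}(G(Q),-)\cong\Hom_{\mathcal{D}}(Q,F(-))$ for $Q$ projective in $\mathcal{D}$, which is exact because $F$ is exact, so $G(Q)$ is projective.

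The two $\Ext$ isomorphisms then follow by resolving and passing to cohomology. For $\Ext_{\mathcal{C}}^{i}(X,G(Y))\cong\Ext_{\mathcal{D}}^{i}(F(X),Y)$ I would pick a projective resolution $P_{\bullet}\ra X$ in $\mathcal{C}$; since $F$ is exact and preserves projectives, $F(P_{\bullet})\ra F(X)$ is a projective resolution in $\mathcal{D}$. The adjunction $(F,G)$ yields a natural isomorphism of cochain complexes $\Hom_{\mathcal{C}}(P_{\bullet},G(Y))\cong\Hom_{\mathcal{D}}(F(P_{\bullet}),Y)$, and taking cohomology gives the claim, the case $i=0$ being the adjunction itself. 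The second isomorphism $\Ext_{\mathcal{C}}^{i}(G(Y),X)\cong\Ext_{\mathcal{D}}^{i}(Y,F(X))$ is entirely parallel: take a projective resolution $Q_{\bullet}\ra Y$ in $\mathcal{D}$, note that $G(Q_{\bullet})\ra G(Y)$ is a projective resolution in $\mathcal{C}$, and apply the adjunction $(G,F)$ to obtain $\Hom_{\mathcal{C}}(G(Q_{\bullet}),X)\cong\Hom_{\mathcal{D}}(Q_{\bullet},F(X))$. The only point demanding genuine care, and the closest thing to an obstacle, is confirming that these adjunction isomorphisms are natural in the resolved variable, so that they commute with the differentials and really yield isomorphisms of complexes rather than merely degreewise isomorphisms; this naturality is exactly what is encoded in the unit and counit identities recorded before Lemma \ref{lem0}.
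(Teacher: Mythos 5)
Your proof is correct; the real point of comparison is that the paper contains no proof of Fact \ref{fact:2.1} at all --- it is stated as a collection of known results quoted from \cite[Proposition 1.3]{casta}, so your write-up supplies an argument the paper leaves to the literature. What you give is the standard argument behind that citation: exactness of $F$ and $G$ from the fact that each is simultaneously a left adjoint (hence preserves cokernels) and a right adjoint (hence preserves kernels); preservation of projectives from the natural isomorphism $\Hom_{\mathcal{D}}(F(P),-)\cong\Hom_{\mathcal{C}}(P,G(-))$, whose right-hand side is a composite of exact functors; and the two $\Ext$ isomorphisms by transporting a projective resolution through the exact, projective-preserving functor and invoking naturality of the adjunction bijection in the resolved variable, which does yield an isomorphism of complexes rather than just degreewise isomorphisms. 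Two minor points you could make explicit: additivity of $F$ and $G$ (needed even to speak of exactness) is automatic, since adjoint functors preserve finite biproducts; and your resolution-based computation of $\Ext$ uses that $\mathcal{C}$ and $\mathcal{D}$ have \emph{enough} projectives, which is stronger than the Fact's literal wording ``have projective objects'' but matches the paper's standing assumption (stated immediately after the Fact) and is in any case what is required for the $\Ext$ groups to be defined by projective resolutions at all.
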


{In what follows, we always assume $(F,G)$ is a Frobenius pair, where $F:\mathcal{C}\rightarrow \mathcal{D}$  and $G:\mathcal{D}\rightarrow \mathcal{C}$ are covariant functors between abelian categories with enough projective objects. }

\begin{prop}\label{thm:3.1}  Let $X$ be an object in $\C$ and $Y$ an object in $\D$.
\begin{enumerate}
\item If $X\in{\GPC}$, then $F(X)\in{\GPD}$. The converse is true if $F$ is faithful.
\item  If $Y\in{\GPD}$, then $G(Y)\in{\GPC}$. The converse is true if $G$ is faithful.
\end{enumerate}
\end{prop}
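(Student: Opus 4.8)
The plan is to prove the two statements symmetrically. Since $(F,G)$ being a Frobenius pair means $(G,F)$ is again a Frobenius pair, with $G:\D\to\C$, applying statement (1) to the pair $(G,F)$ yields statement (2); so I would concentrate on (1) and only indicate the interchange at the end.

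For the \emph{forward} implication of (1), suppose $X\in\GPC$ and fix a totally acyclic complex of projectives $\mathbf{P}:\cdots\to P_1\to P_0\to P^0\to P^1\to\cdots$ in $\C$ with $X\cong\ker(P^0\to P^1)$. By Fact~\ref{fact:2.1} the complex $F(\mathbf{P})$ is exact (as $F$ is exact) and consists of projective objects (as $F$ preserves them), and $F(X)\cong\ker(F(P^0)\to F(P^1))$. The only substantive point is total acyclicity: for a projective $Q\in\D$, the $(F,G)$-adjunction gives a natural isomorphism of complexes $\Hom_{\D}(F(\mathbf{P}),Q)\cong\Hom_{\C}(\mathbf{P},G(Q))$, and since $G(Q)$ is projective the right-hand side is exact. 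Hence $F(\mathbf{P})$ is a complete projective resolution of $F(X)$, so $F(X)\in\GPD$; the forward half of (2) is identical using the $(G,F)$-adjunction and the projectivity of $F(P')$ for projective $P'$.

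The converse rests on a structural observation I would isolate first. Because $(G,F)$ is an adjoint pair and $F$ is faithful, Lemma~\ref{lem0}(2) (for $(G,F)$) makes the counit $\zeta:GF\to 1_{\C}$ an epimorphism; evaluated at a projective $P$ it is a \emph{split} epimorphism, so $P$ is a direct summand of $GF(P)=G(F(P))$ with $F(P)$ projective. Thus every projective of $\C$ is a direct summand of $G(\text{a projective})$. Combined with Fact~\ref{fact:2.1}, this shows that for every $X'\in\C$, projective $P\in\C$ and $i\ge 1$, $\Ext^{i}_{\C}(X',P)$ is a direct summand of $\Ext^{i}_{\C}(X',G(F(P)))\cong\Ext^{i}_{\D}(F(X'),F(P))$; in particular $\Ext^{i}_{\C}(X,P)=0$ for all $i\ge 1$ once $F(X)\in\GPD$. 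The same fact reduces $\Hom_{\C}(-,\text{projective})$-acyclicity in $\C$ to $\Hom_{\D}(F(-),\text{projective})$-acyclicity in $\D$, and $F$ being exact and faithful means it reflects exactness of complexes.

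With these tools I would build a complete projective resolution of $X$: the left half is immediate from enough projectives, while the right half is obtained by transporting a complete projective resolution $\mathbf{Q}$ of $F(X)$ (note $G(\mathbf{Q})$ already witnesses $GF(X)\in\GPC$) and starting from the monomorphism $\eta_X:X\to GF(X)\hookrightarrow G(Q^0)$. Once a two-sided complex $\mathbf{P}$ of projectives with $Z_0\cong X$ is assembled, exactness follows by reflection through $F$, and the $\Hom_{\C}(-,\text{projective})$-acyclicity reduces through the ``summand of $G(\text{projective})$'' fact to total acyclicity of $F(\mathbf{P})$. I expect the genuine obstacle to be precisely the construction of this right coresolution inside $\C$: since $\C$ need not have enough injectives, $X$ must be embedded into projectives repeatedly, and the embeddings cannot be arbitrary—over a hereditary algebra a (Gorenstein) projective module embeds in a projective with non-Gorenstein-projective cokernel, so a careless choice destroys the property. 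The technical heart is therefore to choose the embeddings compatibly with the transported resolution of $F(X)$ so that each cosyzygy $X^{j}$ still has $F(X^{j})\in\GPD$ (equivalently, so the descended cosyzygies stay $\Ext$-orthogonal to all projectives), after which exactness-reflection and the $\Ext$-summand computation close the argument. Statement (2) then follows by interchanging $F\leftrightarrow G$ and $\C\leftrightarrow\D$, using that $G$ faithful makes the counit $\varepsilon:FG\to 1_{\D}$ an epimorphism that splits on projectives.
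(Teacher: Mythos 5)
Your forward implication is correct and is exactly the paper's argument (push $\mathbf{P}$ through $F$, test against a projective $Q$ of $\D$ via the adjunction isomorphism $\Hom_{\D}(F(\mathbf{P}),Q)\cong\Hom_{\C}(\mathbf{P},G(Q))$). For the converse, your preparatory steps are also right: $F$ faithful makes the counit $GF\to 1_{\C}$ a split epimorphism on projectives, so every projective of $\C$ is a summand of $G(\text{projective})$, whence $\Ext^{i}_{\C}(X,P)=0$ for $i\geqslant 1$. But you then explicitly defer ``the technical heart'' --- choosing the embeddings of the successive cosyzygies into projectives so that each $X^{j}$ still has $F(X^{j})\in\GPD$ --- and you give no mechanism that achieves this. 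That is a genuine gap, not a routine verification: as your own hereditary-algebra remark shows, an arbitrary embedding into a projective can destroy the property, so the proof stands or falls on exactly this point.

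The paper closes the gap with a pushout plus one observation absent from your proposal. Since $GF(X)\in\GPC$ (forward direction applied twice), choose $0\to GF(X)\to P^{0}\to L^{1}\to 0$ with $P^{0}$ projective and $L^{1}\in\GPC$; since $F$ is faithful, $\eta_{X}:X\to GF(X)$ is monic with cokernel $K$, and the pushout yields $0\to X\to P^{0}\to H^{1}\to 0$ together with $0\to K\to H^{1}\to L^{1}\to 0$. The missing ingredient is the triangle identity $\varepsilon_{F(X)}F(\eta_{X})=1_{F(X)}$, which holds for any adjunction (no separability or extra compatibility needed): it makes $F(\eta_{X})$ a split monomorphism, so $F(K)$ is a direct summand of $FGF(X)\in\GPD$ and hence Gorenstein projective by closure under summands; then $0\to F(K)\to F(H^{1})\to F(L^{1})\to 0$ exhibits $F(H^{1})$ as an extension of Gorenstein projectives, hence Gorenstein projective. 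Thus $H^{1}$ is in the same situation as $X$ and the construction iterates, with $\Hom_{\C}(-,\text{projective})$-exactness following from your $\Ext$-vanishing computation applied to each cosyzygy. In particular no special ``compatible choice'' of embeddings is required, and the exactness of the coresolution is built in, so the appeal to $F$ reflecting exactness is unnecessary.
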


\begin{proof} We only need to prove (1), and the proof of (2) is similar. The proof is model that of  \cite[Theorem 3.3]{HLGZ}. Let $X$ be a Gorenstein projective object in $\mathcal{C}$. Then there exists an exact complex of projective objects in $\mathcal{C}$:
$$\mathbf{P}:\cdots\ra P_1\ra P_0\ra P^0\ra P^1\ra \cdots$$  with $X\cong \textrm{ker}(P^{0}\ra P^{1})$ such that $\Hom_{\C}(\mathbf{P}, Q)$ is exact for every projective object $Q$ in $\mathcal{C}$. Applying  the functor $F$ to the exact sequence $\mathbf{P}$, we have the following exact sequence of projective objects in $\mathcal{D}$: $$F(\mathbf{P}):\cdots\ra F(P_1)\ra F(P_0)\ra F(P^0)\ra F(P^1)\ra \cdots.$$
Let $Q$ be a projective object in $\mathcal{D}$. It follows that $G(Q)$ is projective in $\mathcal{C}$. Hence the complex $\Hom_{\mathcal{C}}(\mathbf{P}, G(Q))$ is exact, and the complex $\Hom_{\mathcal{D}}(F(\mathbf{P}), Q)$ is exact by adjoint isomorphism. So  $F(X)$ is  Gorenstein projective in $\mathcal{D}$.

 Conversely, we assume that $F(X)$ is a Gorenstein projective in $\mathcal{D}$ and $F$ is faithful. Let $P$ be a projective object in $\mathcal{C}$.  Note that $\Ext_{\mathcal{C}}^{i}(X, GF(P)))\cong\Ext_{\mathcal{D}}^{i}(F(X),F(P))$ for all $i\geqslant1$. Note that $F(P)$ is projective in $\mathcal{D}$ by Proposition \ref{thm:3.1}. Then $\Ext_{\mathcal{D}}^{i}(F(X),F(P))=0$. By Lemma \ref{lem0}, the counit $GF(P)\to P$ is an epimorphism and then $P$ is a direct summand of $GF(P)$. Hence $\Ext_{\mathcal{C}}^{i}(X,P)=0$ for all $i\geqslant1$. It suffices to construct the right part of the complete projective resolution of $X$.  It is easy to check that $GF(X)$ is a Gorenstein projective object in $\mathcal{C}$ by a similar proof above, there exists an exact sequence $0\rightarrow GF(X)\rightarrow P^0\rightarrow L^1\rightarrow0$ in $\mathcal{C}$ with $P^0$ projective and $L^1$ Gorenstein projective. Note that there exists an exact sequence $0\longrightarrow X\s{\eta_{X}}\longrightarrow GF(X)\longrightarrow K\rightarrow 0$  in $\mathcal{C}$ by Lemma \ref{lem0}.
Consider the following pushout diagram:

$$\xymatrix{&&0\ar[d]&0\ar[d]&\\
0\ar[r]&X\ar[r]^{\eta_{X}}\ar@{=}[d]&GF(X)\ar[r]\ar[d]&K
\ar[r]\ar[d]& 0\\
0\ar[r]&X\ar[r]&P^{0}\ar[d]\ar[r]&H^{1}
\ar[r]\ar[d]& 0\\
&&L^{1}\ar@{=}[r]\ar[d]&L^{1}\ar[d]&\\
&&0&\ 0.&\\
}$$
Applying the functor $F$ to the commutative diagram above, we have the following commutative diagram:
$$\xymatrix{&&0\ar[d]&0\ar[d]&\\
0\ar[r]&F(X)\ar[r]^{F(\eta_{X})}\ar@{=}[d]&FGF(X)\ar[r]\ar[d]&F(K)
\ar[r]\ar[d]& 0\\
0\ar[r]&F(X)\ar[r]&F(P^{0})\ar[d]\ar[r]&F(H^{1})
\ar[r]\ar[d]& 0\\
&&F(L^{1})\ar@{=}[r]\ar[d]&F(L^{1})\ar[d]&\\
&&0&\ 0.&\\
}$$
Note that $F(\eta_X)$ is a split monomorphism, thus $F(K)$ is Gorenstein projective because the class of Gorenstein projective objects is closed under direct summands by \cite[Proposition 4.11]{Sather}, and so is  $F(H^{1})$  because the class of Gorenstein projective objects is closed under extensions by \cite[Corollary 4.5]{Sather}. Hence we have an exact sequence $0\rightarrow X\rightarrow P^0\rightarrow H^1\rightarrow 0$ in $\mathcal{C}$ where $P^0$ is projective in $\mathcal{C}$ and $F(H^1)$ is Gorenstein projective in $\mathcal{D}$. By the forgoing proof, we can get that $\Ext_{\mathcal{C}}^{i}(H^{1}, Q)=0$ for all $i\geqslant1$ and any projective object $Q$. Proceed in this manner, we have an exact sequence $0\rightarrow X\rightarrow P^{0}\rightarrow P^{1}\rightarrow \cdots$ in $\mathcal{C}$ with each
$P^i$ projective, which is $\Hom_{\mathcal{C}}(-,Q)$-exact for all projective objects $Q$.
So $X$ is Gorenstein projective.
\end{proof}

Let $X$ be an object in $\mathcal{C}$. The \emph{Gorenstein projective dimension}, ${\rm Gpd}(X)$, of $X$ is defined by declaring that ${\rm Gpd}(X)\leqslant n$ if, and only if there is an exact sequence $0\ra G_{n}\ra \cdots \ra G_{0}\ra X\ra 0$ with all $G_{i}$ Gorenstein projective.

\begin{cor}\label{pro1} If $F$ is faithful, then ${\rm Gpd}(X)= {\rm Gpd}(F(X))$ for any  $X\in{\mathcal{C}}$.
\end{cor}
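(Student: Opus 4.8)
The plan is to prove the two inequalities ${\rm Gpd}(F(X)) \leqslant {\rm Gpd}(X)$ and ${\rm Gpd}(X) \leqslant {\rm Gpd}(F(X))$ separately, leveraging that $F$ is exact (Fact \ref{fact:2.1}) and that, under the faithfulness hypothesis, Proposition \ref{thm:3.1}(1) becomes an \emph{equivalence}: $X \in \GPC$ if and only if $F(X) \in \GPD$.

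For the inequality ${\rm Gpd}(F(X)) \leqslant {\rm Gpd}(X)$, I would start from a finite Gorenstein projective resolution $0 \ra G_n \ra \cdots \ra G_0 \ra X \ra 0$ realizing ${\rm Gpd}(X) = n$. Since $F$ is exact, applying $F$ yields an exact sequence $0 \ra F(G_n) \ra \cdots \ra F(G_0) \ra F(X) \ra 0$ in $\mathcal{D}$. By the forward direction of Proposition \ref{thm:3.1}(1), each $F(G_i)$ is Gorenstein projective in $\mathcal{D}$, so this is a Gorenstein projective resolution of $F(X)$ of length $n$, giving ${\rm Gpd}(F(X)) \leqslant n$. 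Note this direction does not even require faithfulness.

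For the reverse inequality ${\rm Gpd}(X) \leqslant {\rm Gpd}(F(X))$, suppose ${\rm Gpd}(F(X)) = m < \infty$ (if it is infinite there is nothing to prove). The natural approach is to take a projective resolution of $X$ in $\mathcal{C}$ and truncate at the $m$-th syzygy: let $K$ be the $m$-th syzygy, so there is an exact sequence $0 \ra K \ra P_{m-1} \ra \cdots \ra P_0 \ra X \ra 0$ with each $P_i$ projective. The goal is to show $K$ is Gorenstein projective in $\mathcal{C}$, which forces ${\rm Gpd}(X) \leqslant m$. Applying the exact functor $F$ produces an exact sequence $0 \ra F(K) \ra F(P_{m-1}) \ra \cdots \ra F(P_0) \ra F(X) \ra 0$ in $\mathcal{D}$ with each $F(P_i)$ projective (Fact \ref{fact:2.1}); since this truncates a projective resolution of $F(X)$ at its $m$-th syzygy and ${\rm Gpd}(F(X)) \leqslant m$, the object $F(K)$ is Gorenstein projective in $\mathcal{D}$. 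Now the equivalence in Proposition \ref{thm:3.1}(1) (using faithfulness of $F$) lets me conclude that $K$ is Gorenstein projective in $\mathcal{C}$, completing the argument.

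The main obstacle is the standard homological fact I invoke implicitly: that ${\rm Gpd}(M) \leqslant m$ is equivalent to the $m$-th syzygy of $M$ being Gorenstein projective, i.e. that Gorenstein projective dimension is detected by syzygies. This is the ``well-definedness of Gorenstein projective dimension'' and is the point where I must be careful, since it underpins both directions; in an arbitrary abelian category with enough projectives one should cite the relevant result of \cite{Sather} (the analogue of Holm's characterization) guaranteeing that if $M$ has finite Gorenstein projective dimension then every $m$-th syzygy with $m \geqslant {\rm Gpd}(M)$ is Gorenstein projective. Granting that, the rest is a formal consequence of exactness of $F$ together with the ``if and only if'' in Proposition \ref{thm:3.1}(1).
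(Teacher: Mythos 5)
Your proposal is correct and follows essentially the same route as the paper: both directions are handled by applying the exact functor $F$ to a (Gorenstein) projective resolution, invoking the syzygy characterization of finite Gorenstein projective dimension to see that $F(K)$ is Gorenstein projective, and then using the faithfulness converse of Proposition \ref{thm:3.1}(1) to descend to $K$. The only cosmetic difference is that the paper truncates a resolution by Gorenstein projective objects $G_i$ where you use projectives $P_i$; the background fact from \cite{Sather} that you carefully flag is used implicitly in the paper as well.
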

\begin{proof}
Let $X$ be an object in $\mathcal{C}$.
It is easy to check that ${\rm Gpd}(F(X))\leqslant{\rm Gpd}(X)$, now we will show ${\rm Gpd}(X)\leqslant{\rm Gpd}(F(X))$.
If ${\rm Gpd}(F(X))=\infty$, the equality is trivial. Now assume that ${\rm Gpd}(F(X))=m<\infty$. Consider the following exact sequence in $\mathcal{C}$:
$$0\rightarrow K\rightarrow G_{m-1}\rightarrow \cdots \rightarrow G_{1}\rightarrow G_{0}\rightarrow X\rightarrow 0,$$
where $G_{i}$ is Gorenstein projective in $\C$ for $0\leqslant i \leqslant m-1$. By Proposition \ref{thm:3.1}, we have the following exact sequence in $\mathcal{D}$:

$$0\rightarrow F(K)\rightarrow F(G_{m-1})\rightarrow \cdots \rightarrow F(G_{1})\rightarrow F(G_{0})\rightarrow F(X)\rightarrow 0,$$
where $F(G_{i})$ is Gorenstein projective for $0\leqslant i \leqslant m-1$. It follows from ${\rm Gpd}(F(X))=m$ that $F(K)$ is a Gorenstien projective. Thus $K$ is Gorenstein projective in $\C$ by Proposition \ref{thm:3.1}, and hence ${\rm Gpd}(X)\leqslant m$. So ${\rm Gpd}(X)= {\rm Gpd}(F(X))$. This completes the proof.
\end{proof}
\begin{rem}It should be noted that Corollary \ref{pro1} above is obtained by Chen and Ren in a recent preprint \cite[Theorem 3.2]{Chenren} via a slightly different proof.
\end{rem}

\begin{lem}\label{lem:a-GP-precover} Let $M$ be an object in $\C$ and $N$ an object in $\D$.
\begin{enumerate}
\item If $f:X\rightarrow M$ is a Gorenstein projective precover of $M$, then $F(f):F(X)\to F(M)$ is a Gorenstein projective precover of $F(M)$.

\item If $g:Y\rightarrow N$ is a Gorenstein projective precover of $N$, then $G(g):G(Y)\to G(N)$ is a Gorenstein projective precover of $G(N)$.
\end{enumerate}
\end{lem}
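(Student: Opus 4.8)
The plan is to prove part (1); part (2) follows by the evident symmetry, interchanging the roles of $F,G$ and $\C,\D$. So let $f:X\rightarrow M$ be a Gorenstein projective precover of $M$ in $\C$. By Proposition \ref{thm:3.1}(1) we already know $F(X)\in\GPD$, so $F(f):F(X)\to F(M)$ is a morphism from a Gorenstein projective object in $\D$; the only thing to verify is the lifting property. Thus I must take an arbitrary $h:Y\to F(M)$ with $Y\in\GPD$ and produce $t:Y\to F(X)$ with $F(f)\,t=h$.

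The key idea is to transport the problem along the adjunction back into $\C$, use the precover there, and transport the solution forward. First I would apply $G$ to $h$, obtaining $G(h):G(Y)\to GF(M)$ in $\C$. Since $(F,G)$ is a Frobenius pair, $(G,F)$ is also an adjoint pair; I would use the counit of this adjunction at $M$, a natural morphism $GF(M)\to M$ (equivalently, the unit/counit data from Lemma \ref{lem0}), to compose $G(h)$ down to a morphism $G(Y)\to M$. By Proposition \ref{thm:3.1}(2), $G(Y)\in\GPC$, so the precover property of $f:X\to M$ yields a morphism $s:G(Y)\to X$ with $f\,s$ equal to that composite $G(Y)\to M$. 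Now I would apply $F$ to $s$ and compose with the counit $\varepsilon_Y:FG(Y)\to Y$ of the original pair $(F,G)$ — or rather use the adjunction isomorphisms of Fact \ref{fact:2.1} — to convert $s:G(Y)\to X$ into the desired $t:Y\to F(X)$.

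The main obstacle, and the step requiring care, is bookkeeping the two different adjunctions ($(F,G)$ and $(G,F)$) and their units and counits so that the naturality squares actually close up to give $F(f)\,t=h$ rather than merely something that maps to $h$. Concretely, the identity $F(f)\,t=h$ should come down to the triangle identities $\varepsilon_{F(X)}F(\eta_X)=1$ and $G(\varepsilon_Y)\eta_{G(Y)}=1$ stated in the excerpt, together with naturality of the unit/counit applied to $f$. I expect the cleanest route is to phrase the argument entirely through the natural adjunction isomorphism $\Hom_\D(F(A),B)\cong\Hom_\C(A,G(B))$: under this bijection $h\in\Hom_\D(Y,F(M))$ — here using the $(G,F)$ adjunction so that $\Hom_\D(Y,F(M))\cong\Hom_\C(G(Y),M)$ — corresponds to a map $\widehat h:G(Y)\to M$, I would factor $\widehat h$ through $f$ using the precover, and then read the factorization back across the same isomorphism. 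Because adjunction bijections are natural in both variables, postcomposition with $f$ on the $\C$-side corresponds exactly to postcomposition with $F(f)$ on the $\D$-side, which forces $F(f)\,t=h$ and completes the proof.
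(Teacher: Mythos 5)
Your proposal is correct and follows essentially the same route as the paper: both arguments reduce the lifting problem for $F(f)$ to the precover property of $f$ via the adjunction isomorphism $\Hom_{\D}(Y,F(-))\cong\Hom_{\C}(G(Y),-)$ coming from the adjoint pair $(G,F)$, using Proposition \ref{thm:3.1} to ensure $G(Y)\in\GPC$, and naturality in the second variable to conclude that surjectivity of $\Hom_{\C}(G(Y),f)$ forces surjectivity of $\Hom_{\D}(Y,F(f))$. The paper simply packages this as a commutative square of Hom-sets with vertical isomorphisms rather than chasing the single element $h$.
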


\begin{proof} We only prove (1), and the proof of (2) is similar. Assume that $f:X\rightarrow M$ is a Gorenstein projective precover of $M$. Let $L$ be an object in $\GPD$ and $h:L\to F(M)$ a morphism in $\D$. Then we have the following commutative diagram:
$$\xymatrix@=3.8em{ \Hom_{\D}(L,F(X))\ar[d]^{\cong} \ar[r]^{\tiny\Hom_{\D}(L,F(f))} & \Hom_{\D}(L,F(M)) \ar[d]^{\cong}\\
\Hom_{\C}(G(L),X) \ar[r]^{\tiny\Hom_{\C}(G(L),f)} & \Hom_{\C}(G(L),M).}$$
Since $L\in{\GPD}$, $G(L)\in{\GPC}$ by Proposition \ref{thm:3.1}. Note that $f:X\rightarrow M$ is a Gorenstein projective precover. Then $\Hom_{\C}(G(L),f):\Hom_{\C}(G(L),X)\to \Hom_{\C}(G(L),M)$ is epic. Hence $\Hom_{\D}(L,F(f)):\Hom_{\D}(L,F(X)) \to \Hom_{\D}(L,F(M))$ is epic. This completes the proof.
\end{proof}
By \cite[Proposition 1.2]{Auslander}, one can get that $F(\GPC)$ is precovering in $\GPD$. Moreover, we have the following result.
\begin{lem}\label{lem:GP-precovering-class}
\begin{enumerate}
\item If $\GPC$ is precovering in $\mathcal{C}$, then $F(\GPC)$ is precovering in $\mathcal{D}$.

\item If $\GPD$ is precovering in $\mathcal{D}$, then $G(\GPD)$ is precovering in $\mathcal{C}$.
\end{enumerate}
\end{lem}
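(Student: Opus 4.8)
The plan is to transfer the precovering property across the two adjunctions supplied by the Frobenius pair, composing the image of a precover with the appropriate counit. I describe part (1) in detail; part (2) is dual, obtained by exchanging the roles of $F$ and $G$ and using the adjunction $(G,F)$.

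First I would fix an object $N$ in $\mathcal{D}$ and pass to $G(N)$ in $\mathcal{C}$. By hypothesis $\GPC$ is precovering in $\mathcal{C}$, so $G(N)$ admits a Gorenstein projective precover $\varphi\colon X\to G(N)$ with $X\in\GPC$. Using the counit $\varepsilon_N\colon FG(N)\to N$ of the adjoint pair $(F,G)$, I would then set $\phi=\varepsilon_N\circ F(\varphi)\colon F(X)\to N$. Since $X\in\GPC$, the source $F(X)$ lies in $F(\GPC)$, so $\phi$ is a candidate $F(\GPC)$-precover of $N$.

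The second and main step is to check the factorization property. Let $X'\in\GPC$ and let $h\colon F(X')\to N$ be arbitrary. Passing to the adjoint transpose under $\Hom_{\mathcal{D}}(F(X'),N)\cong\Hom_{\mathcal{C}}(X',G(N))$ produces a morphism $k\colon X'\to G(N)$, and the triangle identity $\varepsilon_{F(X')}\circ F(\eta_{X'})=1$ recorded in the introduction gives $h=\varepsilon_N\circ F(k)$. Because $\varphi$ is a Gorenstein projective precover and $X'\in\GPC$, the morphism $k$ factors as $k=\varphi\circ g$ for some $g\colon X'\to X$. Applying $F$ and composing with $\varepsilon_N$ yields $h=\varepsilon_N\circ F(\varphi)\circ F(g)=\phi\circ F(g)$, so $h$ factors through $\phi$ via $F(g)$. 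This proves $\phi$ is an $F(\GPC)$-precover of $N$, whence $F(\GPC)$ is precovering in $\mathcal{D}$.

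I expect no serious obstacle: the argument is essentially formal once one sees that the right object to precover is $G(N)$ rather than $N$ itself, and the only delicate point is the adjunction bookkeeping, namely the identity $h=\varepsilon_N\circ F(k)$ relating $h$ to its transpose $k$. Notably, separability is not needed here---only the Frobenius pair structure, which furnishes both adjunctions and, via Proposition \ref{thm:3.1}, guarantees that $F$ and $G$ keep us inside the Gorenstein projective classes. For part (2) I would run the identical argument starting from $M\in\mathcal{C}$: take a Gorenstein projective precover of $F(M)$ in $\mathcal{D}$ and compose its image under $G$ with the counit $GF(M)\to M$ of the adjunction $(G,F)$.
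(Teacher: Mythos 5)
Your proposal is correct and uses the same core construction as the paper: take a Gorenstein projective precover $f\colon X\to G(N)$ in $\C$ and show that $\varepsilon_N\circ F(f)\colon F(X)\to N$ is an $F(\GPC)$-precover of $N$ (and dually for part (2)). The only difference is in the verification: the paper checks surjectivity of $\Hom_{\D}(F(L),-)$ by factoring through the adjoint Hom-isomorphisms, the split epimorphism $G(\varepsilon_N)$, and an appeal to Lemma \ref{lem:a-GP-precover} for $GF(f)$, whereas you lift the adjoint transpose $k=G(h)\circ\eta_{X'}$ directly through $f$ and transpose back via $h=\varepsilon_N\circ F(k)$ (which also uses naturality of $\varepsilon$ alongside the triangle identity) --- a slightly more economical route that bypasses Lemma \ref{lem:a-GP-precover}; your observation that separability is not needed at this stage matches the paper, which reserves it for the proof of the main theorem.
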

\begin{proof} We will prove (1), and the proof of (2) is similar. Let $N$ be an object in $\D$.  Since $\GPC$ is precovering in $\mathcal{C}$ by hypothesis, there exists a Gorenstein projective precover $f:X\ra G(N)$ of $G(N)$. Set $\gamma:=\varepsilon_{N}F(f):F(X)\ra N$, where $\varepsilon_{N}:FG(N)\ra N$ is the counit of the adjoint pair $(F,G)$. Next we claim that $\gamma:F(X)\ra N$ is a $F(\GPC)$-precover. Let $L$ be an object in $\GPC$ and $h:F(L)\to N$ a morphism in $\D$. Then we have the following commutative diagram:
$$\xymatrix@=4.5em{ \Hom_{\D}(F(L),F(X))\ar[d]^{\cong} \ar[r]^{\tiny\Hom_{\D}(F(L),F(f))} & \Hom_{\D}(F(L),FG(N)) \ar[d]^{\cong}\ar[r]\ar[r]^{\tiny\Hom_{\D}(F(L),\varepsilon_{N})}& \Hom_{\D}(F(L),N) \ar[d]^{\cong}\\
\Hom_{\C}(L,GF(X)) \ar[r]\ar[r]^{\tiny\Hom_{\C}(L,GF(f))} & \Hom_{\C}(L,GFG(N))\ar[r]^{\tiny\Hom_{\C}(L,G(\varepsilon_{N}))}& \Hom_{\C}(L,G(N)).}$$
Note that $G(\varepsilon_{N})\circ \eta_{G(N)}=1_{G(N)}$. Then $\Hom_{\C}(L,G(\varepsilon_{N})):\Hom_{\C}(L,GFG(N))\ra\Hom_{\C}(L,G(N))$ is epic. Hence $\Hom_{\D}(F(L),\varepsilon_{N}):\Hom_{\C}(F(L),FG(N))\ra\Hom_{\C}(F(L),N)$ is epic. Since $f$ is a Gorenstein projective precover of $G(N)$, so is $GF(f):GF(X)\ra GFG(N)$ by Lemma \ref{lem:a-GP-precover}. Thus $\Hom_{\C}(L,GF(f)):\Hom_{\C}(L,GF(X))\ra\Hom_{\C}(L,GFG(N))$ is epic, and hence
$\Hom_{\D}(F(L),F(f)):\Hom_{\D}(F(L),F(X))\ra \Hom_{\D}(F(L),FG(N))$ is epic. So $\Hom_{\D}(F(L),\gamma)= \Hom_{\D}(F(L),\varepsilon_{N})\Hom_{\D}(F(L),F(f))$ is epic. This completes the proof.
\end{proof}

We are now in a position to prove Theorem \ref{tem:GP-precovering}.

{\bf Proof of Theorem \ref{tem:GP-precovering}.} We only prove (1), and the proof of (2) is similar. Let $N$ be an object in $\D$. Then there exists a $F(\GPC)$-precover $\alpha:F(X)\ra N$ with $X\in{\GPC}$ by Lemma \ref{lem:GP-precovering-class}. We will show $\alpha:F(X)\ra N$ is a $\GPD$-precover of $N$. Firstly, we have $F(X)\in\GPD$ by Proposition \ref{thm:3.1}. Now let $g:L\ra N$ be a morphism with $L\in{\GPD}$. Since $G$ is a separable functor, there exists a morphism $h:L\ra FG(L)$ such that $\varepsilon_{L}h=1_{L}$. Since $FG(L)\in{F(\GPC)}$ by Proposition \ref{thm:3.1}, there exits a morphism $\beta:FG(L)\ra F(X)$ such that $\alpha\beta=g\varepsilon_{L}$ by Lemma \ref{lem:GP-precovering-class}. So $\alpha(\beta h)=(\alpha\beta)h=(g\varepsilon_{L})h=g(\varepsilon_{L}h)=g$. This completes the proof.\hfill$\Box$

Recall from \cite[Proposition 1.3]{NA} that a Frobenius extension $S\subseteq R$ is separable if and only if $F:= {_{S}R}\otimes_{R}-:{\RM}\rightarrow \SM$ is a separable functor, where $_{R}\mathcal{M}$ is the category of left $R$-modules and $_{S}\mathcal{M}$ is the category of left $S$-modules. As a consequence of Theorem \ref{tem:GP-precovering}, we have the following corollary.

\begin{cor}\label{cor:separable-Frobenius-extension} Assume that $S\subseteq R$ is a separable Frobenius extension. If $\mathcal{GP}(S)$ is precovering in $_{S}\mathcal{M}$, then $\mathcal{GP}(R)$ is precovering in $_{R}\mathcal{M}$.
\end{cor}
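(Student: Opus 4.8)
The plan is to read this corollary off Theorem~\ref{tem:GP-precovering}(2) after translating the ring-theoretic data into a separable Frobenius pair of functors. Taking $\mathcal{C}={}_{R}\mathcal{M}$ and $\mathcal{D}={}_{S}\mathcal{M}$, both of which are abelian with enough projective objects (free modules suffice), I would set $F:={}_{S}R\otimes_{R}-:{}_{R}\mathcal{M}\to{}_{S}\mathcal{M}$, which is restriction of scalars along $S\subseteq R$, and $G:=R\otimes_{S}-:{}_{S}\mathcal{M}\to{}_{R}\mathcal{M}$, the induction functor. Under this identification $\mathcal{GP}(\mathcal{C})=\mathcal{GP}(R)$ and $\mathcal{GP}(\mathcal{D})=\mathcal{GP}(S)$, so the hypothesis and conclusion of the corollary are precisely those of Theorem~\ref{tem:GP-precovering}(2).

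The one nonformal point, and the step I expect to be the main obstacle, is verifying that $(F,G)$ is a Frobenius pair, i.e.\ that $G$ is simultaneously a left and a right adjoint of $F$. The adjunction $(G,F)=(R\otimes_{S}-,\,F)$ is the usual tensor--hom adjunction and requires no hypothesis. For the other adjunction $(F,G)$ I would invoke the defining properties of a Frobenius extension: since $R$ is finitely generated projective as a left $S$-module and $R\cong\Hom_{S}({}_{S}R,S)$ as an $R$-$S$-bimodule, the coinduction functor $\Hom_{S}(R,-)$ is naturally isomorphic to the induction functor $R\otimes_{S}-=G$. Because $(F,\Hom_{S}(R,-))$ is always an adjoint pair, this natural isomorphism upgrades it to an adjunction $(F,G)$, so that both $(F,G)$ and $(G,F)$ are adjoint pairs and $(F,G)$ is a Frobenius pair. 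This is exactly the classical characterization of Frobenius extensions, and all of it is available from the established theory.

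With $(F,G)$ recognized as a Frobenius pair, the separability hypothesis closes the argument. By the characterization recalled just before the statement, the extension $S\subseteq R$ being separable is equivalent to $F={}_{S}R\otimes_{R}-$ being a separable functor; hence $F$ is a separable Frobenius functor. The assumption that $\mathcal{GP}(S)=\mathcal{GP}(\mathcal{D})$ is precovering in ${}_{S}\mathcal{M}=\mathcal{D}$ then lets me apply Theorem~\ref{tem:GP-precovering}(2) directly to conclude that $\mathcal{GP}(R)=\mathcal{GP}(\mathcal{C})$ is precovering in ${}_{R}\mathcal{M}=\mathcal{C}$, with no further computation required.
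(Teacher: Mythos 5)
Your proposal is correct and follows essentially the same route as the paper: the authors also take $F={}_{S}R\otimes_{R}-$ (restriction of scalars), invoke the cited characterization that separability of the extension is equivalent to separability of $F$, and read the corollary off Theorem~\ref{tem:GP-precovering}(2). The only difference is that you spell out the verification that $(F,G)$ is a Frobenius pair via the isomorphism of coinduction with induction, which the paper leaves implicit as standard Frobenius-extension theory.
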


It follows from \cite[Lemma 4.7]{Huang} that an excellent extension $S\subseteq R$ with $S$ a commutative ring is a separable Frobenius extension. By Corollary \ref{cor:separable-Frobenius-extension}, we have the following corollary.

\begin{cor}\label{cor:excellent} Assume that $S\subseteq R$ is an excellent extension with $S$ a commutative ring. If $\mathcal{GP}(S)$ is precovering in $_{S}\mathcal{M}$, then $\mathcal{GP}(R)$ is precovering is $_{R}\mathcal{M}$.
\end{cor}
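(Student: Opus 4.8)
The plan is to recognize this statement as an immediate specialization of Corollary \ref{cor:separable-Frobenius-extension}, so that the entire task reduces to verifying that the hypothesis of that corollary---namely that $S\subseteq R$ is a \emph{separable Frobenius extension}---holds automatically once $S\subseteq R$ is an excellent extension with $S$ commutative. Granting this, the conclusion that $\mathcal{GP}(R)$ is precovering in $\RM$ whenever $\mathcal{GP}(S)$ is precovering in $\SM$ follows with no further argument.

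Thus the only substantive point is the implication \emph{excellent extension with $S$ commutative} $\Rightarrow$ \emph{separable Frobenius extension}. I would recall that an excellent extension $S\subseteq R$ is, by definition, a finite normalizing extension that is free as a (left and right) $S$-module and is moreover $S$-projective in the relative sense. The finiteness and freeness immediately give that $R$ is finitely generated and projective as a left $S$-module; combined with the normalizing basis and the commutativity of $S$, one obtains the bimodule isomorphism $R\cong\Hom_{S}({}_{S}R,S)$, so that $S\subseteq R$ is a Frobenius extension. The relative $S$-projectivity is precisely what forces the induction functor $F:={}_{S}R\otimes_{R}-:\RM\to\SM$ to be separable, i.e. $S\subseteq R$ is a \emph{separable} Frobenius extension. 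Rather than reprove these standard facts, I would invoke \cite[Lemma 4.7]{Huang}, which records exactly this implication.

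With the excellent extension so identified as a separable Frobenius extension, Corollary \ref{cor:separable-Frobenius-extension} applies verbatim and yields the claim. I expect no genuine obstacle here: the real work has been front-loaded into Theorem \ref{tem:GP-precovering} and its specialization Corollary \ref{cor:separable-Frobenius-extension}, while the verification of the separable-Frobenius property is delegated to the cited lemma. The only point demanding minor care is to match the roles of induction and restriction---and hence of $F$ and $G$ together with their separability---correctly against the hypotheses of Corollary \ref{cor:separable-Frobenius-extension}, but this bookkeeping is routine.
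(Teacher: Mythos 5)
Your proposal matches the paper's argument exactly: the paper also deduces from \cite[Lemma 4.7]{Huang} that an excellent extension with $S$ commutative is a separable Frobenius extension and then applies Corollary \ref{cor:separable-Frobenius-extension}. The extra sketch you give of why the cited lemma holds is harmless but not needed.
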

As a consequence of Corollary \ref{cor:excellent} and \cite[Example 2.2]{Huang}, we have the following example.

\begin{ex}\label{ex:excellent}
\begin{enumerate}
\item Let $S$ be ring and $G$ a finite group. If $|G|^{-1} \in S$, then the skew group ring $R=S\ast G$ is an excellent extension of $S$ by \cite[Example 2.2]{Huang}. It follows from Corollary \ref{cor:excellent} that  $\mathcal{GP}(R)$ is precovering in $_{R}\mathcal{M}$ whenever $S$ is a commutative ring such that $\mathcal{GP}(S)$ is precovering in $_{S}\mathcal{M}$.

\item Let $S$ be a finite-dimensional commutative algebra over a field $K$, and let $S'$ be a finite separable field extension
of $K$. Then $R=S\otimes_{K}S'$ is an excellent extension of $S$. Hence $\mathcal{GP}(R)$ is precovering in $_{S}\mathcal{M}$ by  Corollary \ref{cor:excellent}.

\end{enumerate}
\end{ex}
\bigskip \centerline {\bf ACKNOWLEDGEMENTS}
\bigskip
The authors would like to thank Professor Xiaowu Chen for kindly pointing out that Corollary \ref{pro1} above is obtained in a recent preprint \cite{Chenren} via a
slightly different proof. We
are very grateful to him for the reference and helpful comments. The authors would like to thank
 Wei Ren and Yongliang Sun for helpful discussions on parts of this article.
\bigskip

\renewcommand\refname{\bf References}

\vspace{4mm}
\small

\noindent\textbf{Jiangsheng Hu}\\
School of Mathematics and Physics, Jiangsu University of Technology,
 Changzhou 213001, China\\
E-mail: jiangshenghu@jsut.edu.cn\\[1mm]
\textbf{Huanhuan Li}\\
School of Mathematics and Statistics, Xidian University,
 Xi'an 710071, China\\
 lihuanhuan0416@163.com\\[1mm]
 \textbf{Jiafeng L${\rm \ddot{u}}$}\\
 Department of Mathematics, Zhejiang Normal University,
 Jinhua 321004, China\\
 jiafenglv@zjnu.edu.cn\\[1mm]
\textbf{Dongdong Zhang}\\
Department of Mathematics, Zhejiang Normal University,
 Jinhua 321004, China\\
E-mail: zdd@zjnu.cn\\[1mm]
\end{document}